\renewcommand{\epsilon}{\varepsilon}
\DeclareMathOperator{\dist}{dist}
\def\R{\mathbb{R}}
\def\N{\mathbb{N}}
\def\e{\epsilon}
\def\k{\kappa}
\def\ba #1\ea {\begin{align} #1\end{align}}
\def\bann #1\eann {\begin{align*} #1\end{align*}}
\def\ben #1\een {\begin{enumerate} #1\end{enumerate}}
\def\bi #1\ei {\begin{itemize}\renewcommand\labelitemi{--} #1\end{itemize}}
\newtheorem{theorem}{Theorem}
\newtheorem*{theorem*}{Theorem}
\newtheorem{lemma}[theorem]{Lemma}
\newtheorem{proposition}[theorem]{Proposition}
\newtheorem{corollary}[theorem]{Corollary}
\newtheorem{definition}[theorem]{Definition}
\newtheorem*{conjecture*}{Conjecture}
\newtheorem*{claim*}{Claim}
\newtheoremstyle{TheoremNum}
        {\topsep}{\topsep}              
        {\itshape}                      
        {}                              
        {\bfseries}                     
        {.}                             
        { }                             
        {\thmname{#1}\thmnote{ \bfseries #3}}
    \theoremstyle{TheoremNum}
\date{\today}
\title[Slingshot]{Compact curve shortening flow solutions out of non compact curves}
\author{Theodora Bourni}
\author{Martin Reiris}
\address{Department of Mathematics, University of Tennessee Knoxville, Knoxville TN, 37996-1320}
\email{tbourni@utk.edu}
\address{Facultad de Ciencias, Universidad de la Rep\'ublica, Montevideo, Uruguay}
\email{mareithu@gmail.com}
\begin{document}

\begin{abstract}
We construct a slingshot, that is a compact, embedded solution to curve shortening flow that comes out of a non compact curve and exists for a finite time. 
\end{abstract}

\maketitle

\section{Introduction}
A smooth one-parameter family $\{\Gamma_t\}_{t\in I}$ of immersed planar curves
$\Gamma_t\subset \R^2$ evolves by curve shortening flow if 
\begin{equation}\label{csf}
\frac{\partial\gamma}{\partial t}(u, t)=\vec \kappa(u, t)\,,\,\,\forall (u,t)\in \Gamma\times I\,,
\end{equation}
for some smooth family $\gamma:\Gamma\times I\to \R^2$ of immersions $\gamma(\cdot, t):\Gamma\to \R^2$ of $\Gamma_t$, and where $\vec\kappa(u, t)$ is the curvature vector of $\Gamma_t$ at the point $\gamma(u, t)$.

When $\Gamma_0$ is a smooth embedded compact curve, then by a famous theorem of Grayson \cite{GR87}, the solution of the curve shortening flow starting from $\Gamma_0$ exists on a maximal time interval $[0, T)$ and as $t\to T$ the solution converges to a round point. In the case when $\Gamma_0$ is additionally convex, this theorem was previously proved by Gage and Hamilton \cite{GaHa86}. Contrary to the compact case, when $\Gamma_0$ is not compact solutions to curve shortening flow starting from $\Gamma_0$ are not that well understood in general. The particular case of graphical solutions has been extensively studied in the work of Ecker and Huisken \cite{EckerHuisken89, EckerHuisken91}, who, among other things, showed that the flow of entire graphs exists for all times. In  \cite{CZ98}, K-S Chou and X-P Zhu, showed that that if the initial curve divides the plane into two regions of infinite area, then a solution exists for all time. For the case that one of the regions of the plane defined by the curve has finite area, they showed that, if additionally the curve has finite total absolute curvature, then a solution exists for a finite time equal to that area divided by $\pi$. Moreover, they showed uniqueness of solutions when the initial curve has ends that are representable as graphs over two semi-infinite lines.

In the present paper we want to construct compact solutions emanating from a non compact initial curve. More precisely, given $\Gamma_0$ a smooth embedded curve in $\R^2$, we want to construct a smooth family of compact embeddings 
\[
\gamma: S^1\times(0, T)\to \R^2
\]
that satisfy the curve shortening flow equation \eqref{csf}, and such that the curves $\Gamma_t=\gamma(S^1, t)$ converge to $\Gamma_0$ as $t\to 0$, in the sense that for any $\e>0$, there exists $t_\e$ such that $\Gamma_t$ is in an $\e$-neighborhood of $\Gamma_0$ for all $t\in (0, t_\e)$. Note that such a solution is different from the one constructed in  \cite{CZ98}, as in  \cite{CZ98} the family of solutions satisfying curve shortening flow is non-compact, that is the parameter space $\Gamma$ in \eqref{csf} is homeomorphic to $\R$.

We will consider a curve $\Gamma_0$ that satisfies the following:
\begin{itemize}
\item[(i)]
$\Gamma_0$ is a smooth embedded 1-manifold diffeomorphic to $(0,1)$ and it separates $\R^2$ into two regions, one of which has  finite area, which we denote by $A_0\in (0, \infty)$. 
\item[(ii)] $a+1<b$ and $c>0$ are real numbers such that 
$\Gamma_0\subset (a,\infty)\times (-c,c)$ and $\Gamma_0\cap ([b,\infty)\times (-c,c))$ is the union of two smooth graphs, ${ u^\pm} \in [b,\infty)\rightarrow \R$ with { $u^{+}$} positive and decreasing to zero at infinity and {$u^{-}$} negative and increasing to zero at infinity, and with the derivatives of {$u^\pm$} converging to zero at infinity, as in Figure \ref{fig1}.
\end{itemize}
Moreover, we will denote by $B(\Gamma_0,\epsilon)$ the $\e$ neighborhood of $\Gamma_0$, that is
\[
B(\Gamma_0,\epsilon):=\{p\in \mathbb{R}^{2}: {\rm dist}(p,\Gamma_0)<\epsilon\}\,.
\]
Our main theorem is the following

\begin{theorem} \label{main}Let $\Gamma_0$ be a curve satisfying the above hypotheses (i)-(ii). There exists a smooth solution $\gamma:{\rm S}^{1}\times(0,\frac{A_0}{2\pi})\rightarrow \mathbb{R}^{2}$ to the curve shortening flow \eqref{csf} such that for any $\epsilon>0$ there exists $t_\e>0$ such that $\Gamma_t\subset B(\Gamma_0,\epsilon)$ for $0<t<t_\e$. 
\end{theorem}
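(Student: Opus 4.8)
The plan is to produce the slingshot as a limit of the (Grayson) curve shortening flows of a sequence $\{\Gamma_0^n\}$ of compact embedded curves approximating $\Gamma_0$, the behaviour of the limit flow being forced by the avoidance principle together with the finiteness of $A_0$.

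\emph{The approximating curves.} Let $\Omega_0$ be the finite-area component of $\mathbb R^2\setminus\Gamma_0$, so $|\Omega_0|=A_0$; set $w(x):=u^+(x)-u^-(x)>0$ for $x\ge b$ and $S(x):=\int_x^\infty w(s)\,ds$, which is finite (this is exactly where $A_0<\infty$ enters), positive, strictly decreasing, and tends to $0$ as $x\to\infty$. Fix a sparse sequence $R_n\to\infty$ and let $\Omega^n$ be obtained from $\Omega_0$ by discarding $\Omega_0\cap\{x>R_n\}$ and closing off the thin horn with a smooth cap lying inside $\Omega_0$; put $\Gamma_0^n:=\partial\Omega^n$. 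Using hypothesis (ii) — near $x=R_n$ the set $\Omega_0$ is an almost flat strip of small width $w(R_n)$ — one arranges that each $\Gamma_0^n$ is a smooth embedded circle, coincides with $\Gamma_0$ on $\{x\le R_n-1\}$ (hence $\Gamma_0^n\to\Gamma_0$ in $C^\infty_{\loc}$), satisfies $\Gamma_0^n\subset B(\Gamma_0,\epsilon_n)$ with $\epsilon_n\to 0$, and satisfies $\Omega^1\subset\Omega^2\subset\cdots$ with $\bigcup_n\Omega^n=\Omega_0$, so that $A_n:=|\Omega^n|\nearrow A_0$.

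\emph{Flowing and the key uniform estimates.} By Grayson's theorem \cite{GR87}, the flow $\{\Gamma_t^n\}$ starting from $\Gamma_0^n$ is smooth, compact and embedded on $[0,T_n)$ and shrinks to a round point at $T_n$; since $\frac{d}{dt}|\Omega_t^n|=-\int_{\Gamma_t^n}\kappa\,ds=-2\pi$, we get $T_n=A_n/(2\pi)\nearrow A_0/(2\pi)$. The avoidance principle gives $\Omega_t^n\subset\Omega_t^m$ for $n\le m$. Writing $x_n(t)$ for the largest first coordinate attained on $\Gamma_t^n$, the argument rests on two families of estimates, both uniform in $n$ on each interval $[0,T]$ with $T<A_0/(2\pi)$:
\begin{itemize}
\item[(A)] \emph{Interior control near $\Gamma_0$.} Since $\Gamma_0^n$ equals the fixed smooth curve $\Gamma_0$ on $\{x\le R_n-1\}$, the interior estimates for curve shortening flow (cf.\ \cite{EckerHuisken91}) and pseudolocality bound the curvature of $\Gamma_t^n$ on each fixed half-plane $\{x\le x_0\}$ for $t\in[0,\tau(x_0)]$ independently of $n$, show $\Gamma_t^n\to\Gamma_0$ in $C^\infty_{\loc}$ as $t\to 0$ uniformly in $n$, and — applied along the nearly flat ends — show that near infinity $\Gamma_t^n$ remains two nearly horizontal graphs close to the $x$-axis, joined to a localized cap near $x=x_n(t)$, for $t$ small (uniformly in $n$).
\item[(B)] \emph{The cap is sucked in.} The cap is an arc that must reverse the nearly horizontal tangent direction, hence carries total curvature at least $\frac{\pi}{2}$, while being trapped in a strip of width $\approx w(x_n(t))$ it is short; hence $\Gamma_t^n$ loses area near $x=x_n(t)$ at a rate bounded below and above by positive constants, so after integrating, $c_1t\le S(x_n(t))\le c_2t+\epsilon_n'$ with $\epsilon_n':=S(R_n-1)\to 0$. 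The lower bound forces $x_n(t)\le S^{-1}(c_1t)<\infty$, so all the flows lie in a common bounded set for $t$ bounded away from $0$, which together with (A) yields a uniform curvature bound there; the upper bound forces $x_n(t)\ge S^{-1}(c_2t+\epsilon_n')\to\infty$ as $t\to 0$.
\end{itemize}

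\emph{Passing to the limit and conclusion.} By the uniform curvature bounds and standard interior parabolic estimates, a subsequence of $\{\Gamma_t^n\}$ converges in $C^\infty_{\loc}$ on $\mathbb R^2\times(0,A_0/(2\pi))$ to a smooth curve shortening flow $\{\Gamma_t\}$; by the upper bound on $x_n(t)$ in (B) each $\Gamma_t$ lies in a common bounded set, so it is compact, $|\Omega_t|=A_0-2\pi t>0$ so it is nonempty, and as a $C^\infty$ limit of the nested embedded circles $\Gamma_t^n$ (all eventually inside that bounded set) it is a single embedded circle; thus $\{\Gamma_t\}$ is a smooth family $\gamma:{\rm S}^{1}\times(0,A_0/(2\pi))\to\mathbb R^2$ of embeddings solving \eqref{csf}. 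For the last assertion, given $\epsilon>0$ pick $x_0$ so large that the strip $\{x>x_0,\ |y|\le 2w(x_0)\}$ lies in $B(\Gamma_0,\epsilon)$ (possible since $u^\pm\to 0$); by (A) there is $t'>0$ with $\Gamma_t\cap\{x\le x_0\}\subset B(\Gamma_0,\epsilon)$ for $t<t'$, and by (A)--(B) there is $t''>0$ such that for $t<t''$ the portion $\Gamma_t\cap\{x>x_0\}$ lies in that strip (its ends stay near the $x$-axis and the cap is still at $x_n(t)>x_0$). Hence $\Gamma_t\subset B(\Gamma_0,\epsilon)$ for $0<t<t_\epsilon:=\min\{t',t''\}$.

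\emph{The main obstacle.} The crux is making estimate (B) — and the near-infinity part of (A) — quantitative and uniform in $n$: one must show both that the cap cannot rush in from infinity too fast (so that $\Gamma_t\to\Gamma_0$ as $t\to0$) and that it is nonetheless sucked in at a definite rate (so that the limit flow is compact and smooth for every $t>0$, rather than itself non-compact). This is precisely where the finiteness of $A_0$ and the asymptotic flatness of the ends enter, through the profile function $S(x)$ and barrier comparisons (shrinking circles and translating Grim Reaper caps fitted into the thin horn); the remaining steps are essentially the standard machinery of curve shortening flow.
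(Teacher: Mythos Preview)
Your overall plan matches the paper's: approximate $\Gamma_0$ by closed curves $\Gamma_0^n$, run Grayson's flow, and extract a limit from uniform curvature estimates. Your lower bound in (B), that the cap is sucked into $\{x\le S^{-1}(c_1 t)\}$, is exactly the paper's Lemma~\ref{P2}, proved the same way (the enclosed area to the right of a suitable vertical line decreases at rate at least $\pi-o(1)$).

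The substantive gap is in (A). Ecker--Huisken interior estimates require the flow to remain graphical with bounded gradient in a fixed region for a \emph{fixed} positive time, uniformly in $n$; merely invoking pseudolocality on the flat ends does not deliver this. If one tries horizontal rectangles $[x_0,x_0+L]\times[\delta,c]$ enclosing only the upper graph, the ball-radius $r$ in the graphicality criterion (Proposition~\ref{P1}) is forced to satisfy $r\lesssim w(x_0)$, so the guaranteed time $r^2/2$ tends to $0$ as $x_0\to\infty$, and there is no reason it should exceed the time $t_1\sim S(x_0)/\pi$ at which the cap arrives (take $w(x)\sim x^{-2}$, so $S(x)\sim x^{-1}$, and $w(x)^2\ll S(x)$). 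The paper's key device (Lemma~\ref{mainlem}) is to use \emph{tilted} basic rectangles whose long sides lie on rays issuing from a point $q_k=(b,-y_k)$ just below the $x$-axis, inside the enclosed region: one short side then sits in a fixed ball $B_{\hat r/2}((b,0))\subset\Omega_0$ and the other lies above $\{y=c\}$, so the ball-radius can be taken equal to a fixed $\hat r/4$ independent of how far out the rectangle reaches. Since each such ray meets $\Gamma_0^n$ transversally exactly once, intersection-counting keeps the flow graphical in these rectangles for a fixed time $t_0>0$ regardless of $n$ and of where the cap currently is. This construction is what your sketch is missing, and you correctly flag it as the ``main obstacle''.

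Two further remarks. The claim in (B) that the cap ``is short'' because it lies in a thin strip is false as stated (a long thin hairpin is a counterexample); what actually drives the area estimate is that its total turning is $\pi$, not $\pi/2$, which alone bounds the area-loss rate from below. And for the convergence $\Gamma_t\to\Gamma_0$ the paper does not attempt your upper bound $S(x_n(t))\le c_2 t+\epsilon_n'$; instead it argues by contradiction, placing translating grim reapers outside $\Gamma_0$ as barriers to confine any escaping sequence to a compact set, and then invoking the uniform curvature (hence speed) bound available there. Your route via an inscribed-circle bound on the tip curvature is plausible, but the derivation as written does not justify the claimed inequality.
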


\begin{figure}[h]
\centering
\includegraphics[scale=0.6]{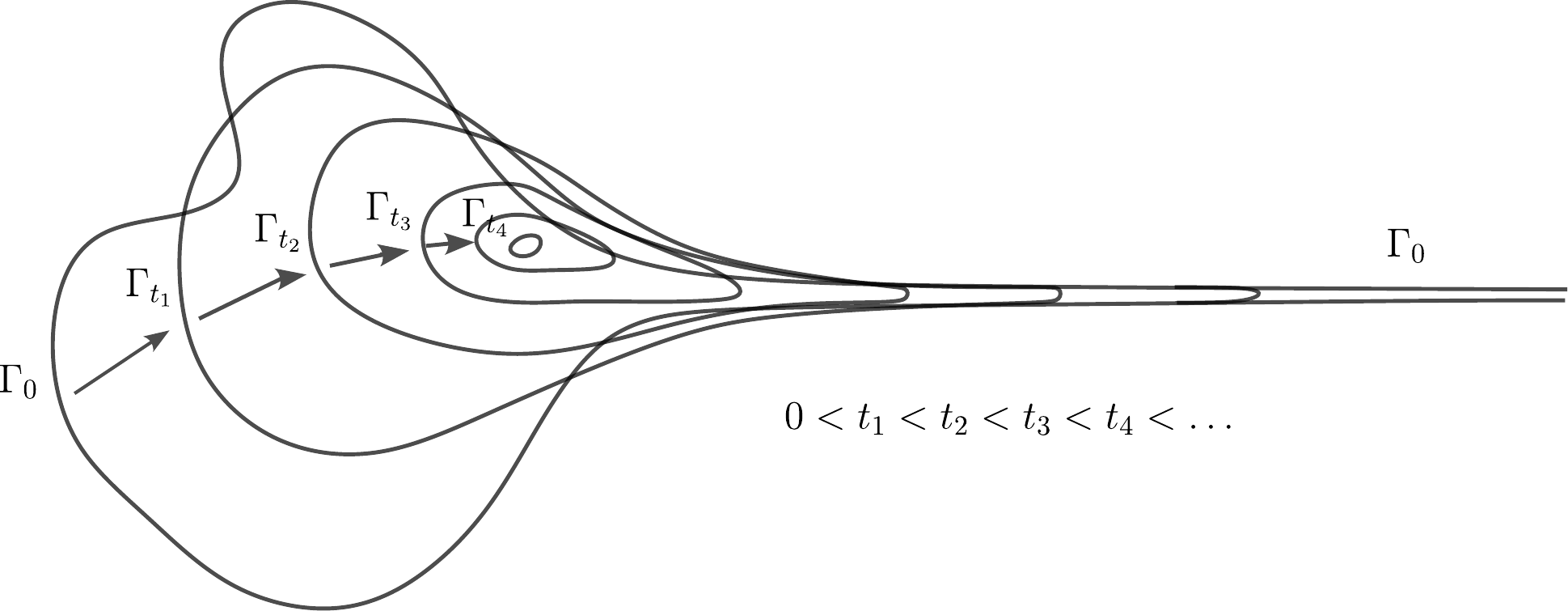}
\caption{Schematic figure of the evolution.}
\label{fig1}
\end{figure}

The construction of the solution described in Theorem \ref{main} is roughly as follows. We start with a sequence of compact curves $\Gamma_0^i$ that approximate $\Gamma_0$. Then, we define a sequence of curve shortening flows, using the curves $\Gamma_0^i$ as initial conditions, which we refer to as slingshots. The idea, then, is to show that one can extract a limit of these slingshots. To do this, we establish uniform curvature bounds for the slingshots away from the initial time 0. This argument, which is the most novel part of this construction, is a direct argument, based on repeated applications of the avoidance principle, and in particular the fact that the number of intersections between two solutions of curve shortening flow (at least one of which is compact) cannot increase in time\cite{MR953678}, together with the curvature estimates of Ecker and Huisken \cite{EckerHuisken91}.

\subsection*{Acknowledgements}

We would like to thank Facultad de Ciencias, Universidad de la Rep\'ublica in Montevideo, Uruguay, for hosting a visit of the first named author, during which this collaboration began. We also like to thank Sigurd Angenent and Mat Langford for conversations on the state of the art concerning non compact solutions to curve shortening flow.

TB was supported through grant 707699 of the Simons Foundation and grant DMS-2105026 of the National Science Foundation.

\section{Construction}
We first show that if a curve is locally, in some rectangle, a graph, then under curve shortening flow and in a smaller rectangle it remains a graph. Moreover, we obtain estimates on the gradient. We remark that such estimates are known in more general contexts but as the proof of the version we need here is relatively simple we do include it for the convenience of the reader.

\begin{proposition}\label{P1}
Let $\gamma_{0}:{\rm S}^{1}\rightarrow \mathbb{R}^{2}$ be a smooth embedding and suppose that for $D>0$, $R>0$ and $r<\frac D2$, the following holds:
\begin{enumerate}
\item for any $|x_{1}|\leq R$ and $|x_{2}|\leq R$, the segment joining $(x_{1},0)$ to $(x_{2},D)$ intersects {$\Gamma_{0}=\gamma_0(S^1)$} transversely and at just one point.
\item for any $|x|\leq R$, the balls $B_r((x,0))$ and $B_r((x,D))$ are disjoint from $\Gamma_{0}$.
\end{enumerate}

Then, the curve shortening flow solution $\gamma: S^1\times [0, T)$ starting at $\gamma(\cdot, 0)=\gamma_0(\cdot)$ satisfies $T\ge \frac{r^{2}}{2}$, and  for all $t\in [0, \frac{r^2}{2}]$ the timeslices $\Gamma_t$ satisfy the following: $\Gamma_t\cap ([-R,R]\times [0,D])$ can be represented as the graph of a smooth function $g_{t}: [-R,R]\rightarrow \mathbb{R}$, with 
\[
\sup_{x\in [-\frac R2, \frac R2]}|g'_t(x)|\le \frac{2D}{R}\,,\,\,and
\]
\[
\sqrt{r^2-2t}< g_t(x)< D-\sqrt{r^2-2t}\,,\,\,\forall x\in [-R, R]\,.
\]

\end{proposition}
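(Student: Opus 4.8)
The plan is to use the avoidance principle against two families of barrier curves: translated/rotated lines (or circles) to preserve the graphical structure and gradient bound, and shrinking circles to control the vertical position of the graph. Let me set up the geometry first. Hypothesis (1) says that the segments from $(x_1,0)$ to $(x_2,D)$, for all $|x_i|\le R$, sweep out a region in which $\Gamma_0$ is a graph with slope controlled: indeed, if $\Gamma_0$ meets each such segment exactly once and transversely, then $\Gamma_0\cap([-R,R]\times[0,D])$ is the graph of some $g_0:[-R,R]\to\R$, and the slope bound comes from the fact that a line through two points $(x_1,0)$ and $(x_2,D)$ with $|x_1|,|x_2|\le R$ that does \emph{not} meet $\Gamma_0\cap([-\tfrac R2,\tfrac R2]\times[0,D])$ forces $|g_0'|\le 2D/R$ there (any steeper and such a connecting segment through a point of the graph would exist). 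So the initial configuration already has the stated form at $t=0$.

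Next I would run the flow and invoke Sturmian intersection-counting / the avoidance principle. A straight line is a static solution of curve shortening flow; a circle of radius $\rho_0$ shrinks to radius $\sqrt{\rho_0^2-2t}$. First I claim the segment-transversality condition (1) is preserved up to time $r^2/2$: the balls $B_r((x,0))$ and $B_r((x,D))$, $|x|\le R$, are disjoint from $\Gamma_0$ by (2), so the shrinking circles $\partial B_{\sqrt{r^2-2t}}((x,0))$ and $\partial B_{\sqrt{r^2-2t}}((x,D))$ stay disjoint from $\Gamma_t$ for $t\in[0,r^2/2)$ by avoidance (here compactness of $\Gamma_t$ is used). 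This immediately gives the two-sided bound $\sqrt{r^2-2t}<g_t(x)<D-\sqrt{r^2-2t}$ for $x\in[-R,R]$ once we know $\Gamma_t$ is still a graph over that interval — the flow cannot enter the disk around $(x,0)$ from below nor the disk around $(x,D)$ from above. To get the graphical property and the gradient bound, I would apply the avoidance principle against every line $\ell$ through a pair $(x_1,0),(x_2,D)$ with $|x_i|\le R$: at $t=0$ such a line meets $\Gamma_0$ transversally at one point, and by the non-increase of intersection number plus the fact that transversal intersections cannot be created, $\Gamma_t\cap\ell$ remains a single transversal point for all $t$ as long as the endpoints of the segment stay outside $\Gamma_t$ — which they do, since they lie in the forbidden disks' complement (take $t$ small; more carefully, track the intersection with the full line and use that $\Gamma_t$ cannot cross the segment endpoints because of the circle barriers). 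This forces $\Gamma_t\cap([-R,R]\times[0,D])$ to be a graph $g_t$ with $|g_t'|\le 2D/R$ on $[-\tfrac R2,\tfrac R2]$ by the same elementary argument as at $t=0$.

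The main obstacle is making the intersection-counting argument airtight at the \emph{boundary} of the rectangle: $\Gamma_t$ is only locally a graph, and a priori part of $\Gamma_t$ coming from outside could wander into $[-R,R]\times[0,D]$ and destroy the graph property even though every connecting segment is met once — so I need to argue that no new arc of $\Gamma_t$ can enter the rectangle through its top or bottom edges (blocked by the circle barriers at $(x,0)$ and $(x,D)$) nor through the vertical sides in a way that creates a second intersection with some connecting segment (this is where the one-point-transversal condition for \emph{all} $|x_i|\le R$, not just a sub-rectangle, does the work, together with counting intersections with the vertical lines $x=\pm R$ via the same Sturmian principle applied to those lines as static barriers). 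One must also confirm $T\ge r^2/2$: the curvature estimates of Ecker--Huisken, or simply the fact that the circle barriers of positive radius $\sqrt{r^2-2t}$ keep $\Gamma_t$ at positive distance from the points $(x,0)$, give local bounds precluding a singularity in this region before $t=r^2/2$, and since $\gamma_0$ is a compact embedded curve short-time existence plus the standard continuation criterion handle the rest. The slope estimate itself, once graphicality is known, is then purely a statement about the barrier lines and requires no further PDE input. I would organize the write-up as: (a) reformulate (1)--(2) as a statement about $\Gamma_0\cap([-R,R]\times[0,D])$ being a controlled graph; (b) propagate the circle barriers by avoidance to get the vertical pinching and to seal the horizontal edges of the rectangle; (c) propagate the line barriers by intersection-counting to get graphicality and the gradient bound; (d) deduce $T\ge r^2/2$.
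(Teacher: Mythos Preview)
Your approach is correct and matches the paper's: shrinking-circle barriers centred at $(x,0)$ and $(x,D)$ via the avoidance principle seal the horizontal edges of the rectangle and give the height bounds, while Angenent's intersection-counting applied to the segments of hypothesis~(1) preserves graphicality and yields the gradient bound (the paper picks the specific segments from $(x\pm R/2,0)$ or $(x\pm R/2,D)$ through $p=(x,g_t(x))$, according to whether $g_t(x)\gtrless D/2$). The paper's lifespan argument $T\ge r^2/2$ is simply the linking/area observation---a shrinking circle trapped inside $\Gamma_t$ forces positive enclosed area up to $t=r^2/2$---rather than any appeal to Ecker--Huisken.
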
 

\begin{proof}
Note first that by hypothesis (2) of the proposition and the avoidance principle we obtain that 
\begin{equation}\label{endpoints}
([-R, R]\times\{0, D\})\cap \Gamma_t=\emptyset\,,\,\,\forall t\in[0, \tfrac{r^2}{2}]\,,
\end{equation}
and note that a simple linking argument shows that the curve shortening flow solution starting at $\Gamma_0$ does indeed have a lifespan of time at least $\frac{r^2}{2}$.
Recall that the number of intersections between two compact solutions of curve shortening flow cannot increase \cite{MR953678}. Therefore, hypothesis (1) of the proposition applied to segments with endpoints $(x, 0)$ and $(x, D)$, $x\in [-R,R]$, along with \eqref{endpoints}, imply that  $\Gamma_t\cap ([-\frac{R}{2},\frac{R}{2}]\times [0,D])$ can be represented as a graph of a smooth function $g_{t}: [-R,R]\rightarrow  \mathbb{R}$. To prove the gradient bound, consider a point on the graph $p=(x, g_t(x))$ with $x\in [-\tfrac R2, \tfrac R2]$ and suppose that $g_t(x)\ge \frac D2$. Consider the two line segments joining $(x\pm \frac{R}{2}, 0)$ to $p$ and extending them pass $p$ we note that they intersect the segment $[-R, R]\times\{D\}$. Thus, by hypothesis (1), these segments lie below the graph of $g_t$ and we obtain that $|g_t'(x)|\le \frac{g_t(x)}{R/2}\le \frac{2D}{R}$. If the point $p$ satisfies $g_t(x)\le \frac D2$, we obtain the same estimate by considering the segments joining $(x\pm \frac{R}{2}, D)$ to $p$ and extending them pass $p$. Finally, the height bounds are a cosequence of the avoidance principle and hypothesis (2).
\end{proof}

Proposition \ref{P1} and the curvature estimates of Ecker-Huisken  \cite{EckerHuisken91} yield the following 

\begin{corollary}\label{C1}
Under the hypothesis of Proposition \ref{P1}, for every integer $m\geq 1$, there is a constant $c_{m}=c(m,R,D, \Gamma_0)$ such that
\begin{equation}
\sup_{p\in \Gamma_t\cap ([-\frac{R}{4},\frac{R}{4}]\times[0, D])}|\partial_{s}^{m}\kappa(p,t)|\leq c_{m}\,,\,\,\forall t\in  [0,\tfrac{r^{2}}{2}]\,,
\end{equation}
where $\kappa(p,t)$ denotes the curvature of $\Gamma_t$ at the point $p$.
\end{corollary}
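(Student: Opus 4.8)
The plan is to combine the graphical description and gradient bound from Proposition \ref{P1} with the interior curvature estimates of Ecker--Huisken \cite{EckerHuisken91}, which apply to graphical mean curvature (here, curve shortening) flows and give bounds on all spatial derivatives of the curvature that depend only on the gradient bound, the height of the slab, and the distance to the parabolic boundary. Concretely, Proposition \ref{P1} tells us that over the interval $[0,\tfrac{r^2}{2}]$ the piece $\Gamma_t\cap([-R,R]\times[0,D])$ is the graph of $g_t$ over $[-R,R]$, with $|g_t'|\le \tfrac{2D}{R}$ on $[-\tfrac R2,\tfrac R2]$, and with $g_t$ bounded away from the edges $x_2=0$ and $x_2=D$. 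So on the smaller rectangle $[-\tfrac R2,\tfrac R2]\times[0,D]$ we have a graphical curve shortening flow with a uniform gradient bound.

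First I would set up the rescaled/localized Ecker--Huisken estimates: for a curve evolving as a graph $x_2=g(x_1,t)$ over a region with $|Dg|\le \Lambda$ on a parabolic cylinder $Q = [-\tfrac R2,\tfrac R2]\times[0,\tfrac{r^2}{2}]$, one has, for each $m\ge 1$, a bound of the form $\sup_{[-\tfrac R4,\tfrac R4]\times[\theta,\tfrac{r^2}{2}]}|\partial_s^m\kappa|\le C(m,\Lambda, R, r, \theta)$, where $s$ is arclength and $\theta>0$ is any fixed initial delay. The key point is that this is a purely local, interior estimate: once we know the flow is graphical with a gradient bound on a slightly larger rectangle, the higher derivative bounds on the smaller one follow, and the constants depend only on the listed quantities and not on finer features of $\gamma_0$. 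The $t=0$ endpoint is handled because hypotheses (1)--(2) of Proposition \ref{P1} already give a graphical representation at the initial time, so $\Gamma_0$ restricted to the rectangle is itself a smooth graph with bounded derivatives of all orders (these orders being controlled by $\Gamma_0$, which is why the constant is allowed to depend on $\Gamma_0$); combining the $t=0$ smoothness with the interior estimate for $t>0$ and a short-time parabolic estimate removes the delay $\theta$ and yields a bound valid on all of $[0,\tfrac{r^2}{2}]$.

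The main step, and the only real obstacle, is making the passage from ``graphical with a gradient bound'' to ``all derivatives of curvature bounded'' rigorous and identifying precisely what the constant $c_m$ depends on. I would do this by invoking the relevant estimates from \cite{EckerHuisken91} directly — their Theorem on interior estimates for graphs gives $C^{2,\alpha}$ and then, by Schauder bootstrapping applied to the (quasilinear, uniformly parabolic once the gradient is bounded away from verticality) graph equation $\partial_t g = \dfrac{g_{xx}}{1+g_x^2}$, all higher derivatives. Since the gradient bound from Proposition \ref{P1} is on $[-\tfrac R2,\tfrac R2]$ while we only claim the curvature bound on $[-\tfrac R4,\tfrac R4]$, there is ample room to run the interior estimates without boundary interference, and the height bounds $\sqrt{r^2-2t}<g_t<D-\sqrt{r^2-2t}$ guarantee the graph never touches the horizontal edges, so no spatial boundary condition is needed either. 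Translating the resulting bounds on $\partial_x^k g_t$ into bounds on $\partial_s^m\kappa$ is then a routine chain-rule computation using $\kappa = \dfrac{g_{xx}}{(1+g_x^2)^{3/2}}$ and $\partial_s = \dfrac{1}{\sqrt{1+g_x^2}}\partial_x$, together with the uniform upper and lower bounds on $1+g_x^2$.
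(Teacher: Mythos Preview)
Your proposal is correct and follows essentially the same route as the paper: use the gradient bound from Proposition~\ref{P1} on $[-\tfrac R2,\tfrac R2]$ and feed it into the Ecker--Huisken interior estimates, with the dependence on $\Gamma_0$ absorbing the initial data so that no time delay is needed. The only cosmetic difference is that the paper quotes the specific localized quantity from \cite[Theorem~3.1]{EckerHuisken91} (namely $\kappa^2\tfrac{v^2}{1-k^2v^2}$ multiplied by a spatial cutoff, with $v=\langle\nu,e_2\rangle^{-2}$) and then \cite[Theorem~3.4]{EckerHuisken91} for the higher derivatives, rather than passing through Schauder bootstrapping on the graph equation; both arguments yield the same conclusion with the same dependencies.
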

\begin{proof}
The proof is evident from the estimates in \cite{EckerHuisken91} by removing the time dependence from the bounds. Nonetheless, we include a sketch here for the convenience of the reader.

We first prove the case $m=0$. Consider a point $p_0=(x, y)$, with $|x|< \frac{R}{4}$ and $y\in (0, D)$, and let  $v=v(p,t)=\langle \nu, e_2\rangle^{-2}$, where $\nu=\nu(p,t)$ is a choice of the unit normal to $\Gamma_t$ at $p$. Consider now $G_t$ to be the connected component of $\Gamma_t\cap B_{\frac R4}(p_0)$ that is the graph of $g_t$  as in  Proposition \ref{P1}. Then, by Proposition \ref{P1}, we have that 
\[
v(p, t)\le 1+\frac{4D^2}{R^2}\,,\,\,\forall p\in G_t\,,\,\, \forall t\in [0, \tfrac{r^2}{2}]\,. 
\]
Define the function $g(p, t)=\k(p,t)^2\frac{v^2}{1- k^2v^2} ((\tfrac R4)^2-|p-p_0|^2)^2$, where $k=\frac12+\frac{2D^2}{R^2}$. Note that $g(p, 0)\le C R^2$, where $C=\sup_{G_0} \kappa^2$, a constant that depends only on $\gamma_0$.  If $g$ has a maximum at a point $(p, t)\in G_t\times (0, \frac{r^2}{2}]$, then, by computing the heat operator of $g$ (see \cite[proof of Theorem 3.1]{EckerHuisken91}), we obtain
\[
g(p,t)\le c(n,k) R^2\,.
\]
We therefore conclude the estimate for $m=0$.
The higher derivative bounds can be computed similarly by considering $\psi=1$ in \cite[proof of Theorem 3.4]{EckerHuisken91}. 
\end{proof}

\begin{definition}\label{def-approx}
A basic rectangle $\mathcal{F}(R,D,r)$ for an embedded curve $\Gamma$ consists of a number $r>0$ and a rectangle isometric to $[-R,R]\times [0,D]$ by an isometry $T$, such that: 
\begin{enumerate}
\item for any $|x_{1}|\leq R$ and $|x_{2}|\leq R$, the segment joining $T((x_{1},0))$ to $T((x_{2},D))$ intersects $\Gamma$ transversely and at just one point. 
\item for any $|x|\leq R$ the balls $B_{r}(T(x,0))$ and $B_{r}(T(x,D))$ are disjoint from $\Gamma$. 
\end{enumerate}
$T$ as above, will be referred to the isometry associated to $\mathcal{F}(R,D,r)$.

If $\mathcal{F}(R,D,r)$ is a basic rectangle for $\Gamma$ and $T$ is its associated isometry, then $T([-\frac{R}{4},\frac{R}{4}]\times [0,D])$ together with $r$, form also a basic rectangle for $\Gamma$, which will be denoted by $\mathcal{F}_{*}(R,D,r)$.
\end{definition}
\vspace{.2cm}

It is clear that the estimates in the statement of Corollary \ref{C1} work exactly the same when we replace the basic rectangle $[-R,R]\times [0,D]$ by basic rectangles $\mathcal{F}(R,D,r)$ for the curve $\Gamma_{0}$. More precisely, Proposition \ref{P1} and Corollary \ref{C1} yield the following:
\begin{proposition}\label{curvest} Assume that $\mathcal F(R, D,r)$ is a basic rectangle for an embedded smooth curve $\Gamma_0$. Then the curve shortening flow solution starting from $\Gamma_0$ exists for time at least $\frac{r^2}{2}$ and the timeslices $\Gamma_t$ satisfy the following curvature estimate.
For every integer $m\geq 1$, there is a constant $c_{m}=c(m,R,D, \Gamma_0)$, such that 
\begin{equation*}
\sup_{p\in \Gamma_t\cap \mathcal { F_{*}}(R,D,r)}|\partial_{s}^{m}\kappa(p,t)|\leq c_{m}\,,\,\,
\forall t\in  [0,\tfrac{r^{2}}{2}]\,,
\end{equation*}
where $\kappa(p,t)$ denotes the curvature of $\Gamma_t$ at the point $p$.
\end{proposition}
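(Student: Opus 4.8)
The plan is to deduce this proposition directly from Proposition \ref{P1} and Corollary \ref{C1} by exploiting that curve shortening flow is equivariant, and the quantities $\partial_s^m\kappa$ are invariant, under Euclidean isometries of $\R^2$. Let $T$ be the isometry associated to $\mathcal F(R,D,r)$, so $T$ maps $[-R,R]\times[0,D]$ onto the given rectangle, and set $\tilde\Gamma_0:=T^{-1}(\Gamma_0)$ with a parametrization $\tilde\gamma_0$. By Definition \ref{def-approx}, the pair consisting of the standard rectangle $[-R,R]\times[0,D]$ and the number $r$ satisfies hypotheses (1) and (2) of Proposition \ref{P1} for the curve $\tilde\Gamma_0$.

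First I would record the existence time. Since $T$ is an isometry, $T$ sends the curvature vector of a curve to the curvature vector of its image, so $\tilde\Gamma_t:=T^{-1}(\Gamma_t)$ solves \eqref{csf} if and only if $\Gamma_t$ does; in particular the two flows have the same maximal existence time. Applying Proposition \ref{P1} to $\tilde\gamma_0$ gives lifespan at least $\tfrac{r^2}{2}$, which is the first assertion. Next, Corollary \ref{C1} applied to $\tilde\gamma_0$ yields, for every integer $m\ge 1$, a constant $c_m=c(m,R,D,\tilde\Gamma_0)$ with
\[
\sup_{p\in\tilde\Gamma_t\cap([-\tfrac R4,\tfrac R4]\times[0,D])}|\partial_s^m\kappa(p,t)|\le c_m\,,\qquad \forall\, t\in[0,\tfrac{r^2}{2}]\,.
\]
Because geodesic curvature and arc-length differentiation are preserved by the isometry $T$, one has $|\partial_s^m\kappa(T(p),t)|=|\partial_s^m\kappa(p,t)|$ for $p\in\tilde\Gamma_t$, and $T([-\tfrac R4,\tfrac R4]\times[0,D])=\mathcal F_*(R,D,r)$ by definition of $\mathcal F_*$. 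Transporting the displayed inequality through $T$ therefore gives exactly the claimed bound on $\Gamma_t\cap\mathcal F_*(R,D,r)$.

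The one point that deserves a sentence is the dependence of the constant. A priori Corollary \ref{C1} produces $c_m=c(m,R,D,\tilde\Gamma_0)$, but tracing its proof the only way the initial curve enters is through $\sup\kappa^2$ over the portion of $\tilde\Gamma_0$ lying in the rectangle (together with the gradient bound from Proposition \ref{P1}, which depends only on $R$ and $D$); by isometry invariance of curvature this supremum equals the corresponding supremum for $\Gamma_0$ over $\mathcal F(R,D,r)$, so $c_m$ may be taken to depend only on $m,R,D$ and $\Gamma_0$. Beyond this bookkeeping there is no real obstacle: the entire content of the statement is the isometry-equivariance of the flow and the isometry-invariance of $\partial_s^m\kappa$ under the map $T$.
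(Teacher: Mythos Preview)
Your proposal is correct and is precisely the argument the paper has in mind: the paper states the proposition without proof, prefacing it with the remark that ``the estimates in the statement of Corollary \ref{C1} work exactly the same when we replace the basic rectangle $[-R,R]\times[0,D]$ by basic rectangles $\mathcal F(R,D,r)$,'' i.e., one transports by the associated isometry $T$ and invokes Proposition \ref{P1} and Corollary \ref{C1}. Your extra sentence tracking the constant through the isometry is a useful clarification but not a departure from the paper's approach.
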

\vspace{.2cm}

\begin{definition}
For every integer $i\geq b+3$, consider the connected part of $\Gamma_0$ between $(i,u^{+}(i))$ and $(i,u^{-}(i))$ and cup it up with an embedded piece joining these two end points and lying inside the rectangle $[i,i+1]\times [u^{-}(i),u^{+}(i)]$, so that we obtain a smooth embedded and compact curve which we denote by $\Gamma_0^i$. Let $\gamma^{i}_{0}:{\rm S}^{1}\rightarrow \mathbb{R}^{2}$ be a parametrization of $\Gamma^i_0$. The solutions to the curve shortening flow starting from $\Gamma^{i}_{0}$ are denoted by $\Gamma^{i}_{t}$  and are called slingshots. Moreover, for each $i$, we will use $\gamma^i(\cdot, t)$ to denote any parametrization of the flow, which, as such, satisfies \eqref{csf}.
\end{definition} 

The following lemma says essentially that the slings enter compact regions in arbitrarily small times uniformly in $i$.

\begin{lemma}\label{P2} For any decreasing sequence of times $t_{j}\downarrow 0$, there exists a sequence of numbers $x_{j}$, such that the slingshots, after passing to a subsequence  $\Gamma_t^j$, satisfy
\[
\Gamma^{k}_{t}\subset [a, x_j]\times [-c,c]\,,\,\,\forall k\ge j,\text{ and }\, t\ge t_{j}\,.
\]
\end{lemma}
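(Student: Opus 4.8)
The argument splits into three elementary reductions and one substantive estimate, the substantive one being where the finiteness of the enclosed area enters.

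\emph{Easy containments and reduction.} First, $\Gamma_0^{k}$ is a compact subset of the open set $(a,\infty)\times(-c,c)$ (its ``middle'' is a fixed compact arc, and the cap lies in $[k,k+1]\times[u^{-}(k),u^{+}(k)]$ with $k\ge b+3>a$), so the avoidance principle applied to the static solutions given by the lines $x_{2}=\pm c$ and $x_{1}=a$ yields $\Gamma_{t}^{k}\subset(a,\infty)\times(-c,c)$ throughout the flow. Second, each supporting line of $\Gamma^{k}_{0}$ is a static solution, so $\Gamma^{k}_{t}\subset\mathrm{conv}(\Gamma^{k}_{0})$ for all $t$, and in particular $t\mapsto\sup_{\Gamma^{k}_{t}}x_{1}$ is non-increasing; it therefore suffices to bound $\sup_{\Gamma^{k}_{t_{j}}}x_{1}$ by a number $x_{j}$ independent of $k$ for all large $k$ (relabelling turns ``$k$ large'' into ``$k\ge j$'', and a diagonal choice handles all $j$). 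Third, $\tfrac{d}{dt}\mathrm{Area}(\Omega^{k}_{t})=-\int_{\Gamma^{k}_{t}}\kappa\,ds=-2\pi$ and $\mathrm{Area}(\Omega^{k}_{0})\to A_{0}>0$, so for large $k$ all slingshots exist past a fixed $T_{0}<A_{0}/(4\pi)$; we may assume $t_{j}<T_{0}$, the finitely many larger $t_{j}$ being covered by the monotonicity just noted.

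\emph{Mechanism and flatness input.} Fix a large abscissa $X\ge b$ (to be chosen in terms of $t_{j}$) and write $w(x):=u^{+}(x)-u^{-}(x)$, so $w>0$ is decreasing with $\int_{b}^{\infty}w\,dx\le A_{0}<\infty$; hence $\int_{X}^{\infty}w\,dx\to0$ and $w(x)\to0$ as $X,x\to\infty$. The point is that the only part of $\Gamma^{k}_{t}$ with appreciable curvature beyond $x_{1}=X$ is the ``cap'', which recedes to the left, cutting away enclosed area at a rate close to $\pi$; since the total tail area is small, the cap must cross back past $X$ within a time that, uniformly in $k$, is less than $t_{j}$. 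To control the curvature elsewhere, note that near $x_{1}=X$ the tail $\Gamma_{0}\cap\{x_{1}\ge X-1\}$ is a pair of graphs $u^{\pm}$ of slope $\le\sup_{x\ge X-1}|(u^{\pm})'|\to0$, so one can choose basic rectangles straddling each of $u^{+},u^{-}$ near $x_{1}=X$ satisfying the hypotheses of Proposition \ref{P1}. Applying Proposition \ref{P1} and Corollary \ref{C1}, and bootstrapping in time (the admissible radius $r$ is comparable to the fixed positive number $w(X)$, and a nearly flat graph drifts so slowly that the hypotheses of Proposition \ref{P1} persist and the constants do not degenerate, so the estimate can be re-applied finitely many times to reach $t_{j}$), one obtains a constant $\epsilon(X)\to0$ (as $X\to\infty$) such that for all $k$ and all $t\le t_{j}$, wherever $\Gamma^{k}_{t}$ is still graphical over $\{x_{1}\ge X\}$ it is a union of two graphs of slope $\le\epsilon(X)$ meeting $\{x_{1}=X\}$ transversally.

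\emph{Area computation and conclusion.} By \cite{MR953678} applied to $\Gamma^{k}_{t}$ and the static line $\{x_{1}=X\}$, and since $\Gamma^{k}_{0}$ meets $\{x_{1}=X\}$ in exactly two points (because $X\ge b$ and $\Gamma_{0}\cap\{x_{1}\ge b\}=u^{+}\cup u^{-}$), $\Gamma^{k}_{t}$ meets $\{x_{1}=X\}$ in at most two points; when it is not exactly two, $\mathrm{Area}(\Omega^{k}_{t}\cap\{x_{1}>X\})$ is $0$ (so $\sup_{\Gamma^{k}_{t}}x_{1}\le X$) — the degenerate possibility that $\Gamma^{k}_{t}$ lies entirely in $\{x_{1}>X\}$ being excluded. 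Otherwise $\Gamma^{k}_{t}\cap\{x_{1}>X\}$ is a single embedded arc which, with the chord $\{x_{1}=X\}\cap\Omega^{k}_{t}$, bounds the topological disc $\Omega^{k}_{t}\cap\{x_{1}>X\}$; by Gauss--Bonnet for this disc, using that the arc meets $\{x_{1}=X\}$ with slope $\le\epsilon(X)$,
\[
\int_{\Gamma^{k}_{t}\cap\{x_{1}>X\}}\kappa\,ds=\pi+O(\epsilon(X)).
\]
As $\{x_{1}>X\}$ is a fixed half-plane, the transport formula gives, while this area is positive,
\[
\frac{d}{dt}\,\mathrm{Area}\bigl(\Omega^{k}_{t}\cap\{x_{1}>X\}\bigr)=-\int_{\Gamma^{k}_{t}\cap\{x_{1}>X\}}\kappa\,ds=-\pi+O(\epsilon(X)).
\]
At $t=0$ this area equals $\int_{X}^{k}w\,dx$ plus the cap area, and the latter is $\le\delta_{k}\to0$ if the caps are chosen inside $[k,k+\delta_{k}]\times[u^{-}(k),u^{+}(k)]$ with $\delta_{k}\downarrow0$. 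Hence $\mathrm{Area}(\Omega^{k}_{t}\cap\{x_{1}>X\})$ reaches $0$ — i.e.\ $\sup_{\Gamma^{k}_{t}}x_{1}\le X$ — by time $(\int_{X}^{\infty}w+\delta_{k})/(\pi-O(\epsilon(X)))$. Choosing first $X=x_{j}$ large (depending only on $t_{j}$) so that $\int_{x_{j}}^{\infty}w$ and $\epsilon(x_{j})$ are small, then $k$ large so that $\delta_{k}$ is small, makes this time $<t_{j}$; so $\sup_{\Gamma^{k}_{t_{j}}}x_{1}\le x_{j}$, and by monotonicity $\Gamma^{k}_{t}\subset[a,x_{j}]\times[-c,c]$ for all $t\ge t_{j}$, which is the claim.

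\emph{Main obstacle.} The delicate step is the uniform-in-$(k,t)$ flatness of the graphical tail near $x_{1}=X$ for $t\in[0,t_{j}]$: Proposition \ref{P1} and Corollary \ref{C1} are only valid on $[0,r^{2}/2]$, and on the thin tube the admissible $r$ is comparable to the small number $w(X)$, so one genuinely needs the bootstrapping argument (slow drift of a nearly flat graph, with stable constants) to carry the estimate up to the fixed time $t_{j}$; alternatively one can quote directly the time-independent graphical estimates of \cite{EckerHuisken91}. The remaining ingredients — the two elementary containments, the monotonicity of $\sup x_{1}$, the Gauss--Bonnet/transport computation, and excluding the case $\Gamma^{k}_{t}\subset\{x_{1}>X\}$ (ruled out, e.g., by placing a small shrinking disc in $\Omega^{k}_{0}$ near the fixed middle of $\Gamma_{0}$ and noting it survives past $t_{j}$ once $t_{j}$ is small) — are routine.
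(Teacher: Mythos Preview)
Your overall strategy matches the paper's: control the area $A^k_+(t)$ of $\Omega^k_t\cap\{x_1>X\}$, show via Gauss--Bonnet that it decreases at rate close to $\pi$ provided the curve crosses $\{x_1=X\}$ with controlled slope, and choose $X$ so that the initial tail area is small enough to be exhausted by time $t_j$. The reductions (slab containment, monotonicity of $\sup x_1$, exclusion of $\Gamma^k_t\subset\{x_1>X\}$ via a trapped shrinking disc) are all correct and agree with the paper.

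The substantive difference is in how you obtain slope control at the crossing, and here the paper's argument is both simpler and avoids precisely the obstacle you flag. You place thin basic rectangles of height comparable to $w(X)$ around each graph separately; then the admissible $r$ in Proposition~\ref{P1} is $\sim w(X)$, the validity interval $[0,r^2/2]$ is much shorter than $t_j$, and you are forced into a bootstrap (or a direct appeal to the time-independent Ecker--Huisken interior estimates). Your bootstrap sketch is plausible but not fully justified: the curvature bound from Corollary~\ref{C1} depends on the initial data at each restart, and one must check that constants do not accumulate over $\sim t_j/w(X)^2$ iterations. Quoting \cite{EckerHuisken91} directly does work, but uses more than the paper needs.

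The paper instead takes a single \emph{large} rectangle $[-R+x_j,R+x_j]\times[-c,c]$ of full height $D=2c$ and fixed width $R=16c/\pi$, with $r=\sqrt{2t_j}$. Because $u^\pm(x)\to 0$, for $x_j$ large the curve stays well away from $y=\pm c$, so condition~(2) of Proposition~\ref{P1} holds with this small $r$; the validity interval is then exactly $[0,t_j]$, no iteration needed. The resulting slope bound $2D/R=\pi/4$ is fixed (not tending to $0$), but that is all one needs: Gauss--Bonnet gives $-\tfrac{d}{dt}A^k_+(t)\ge\pi-8c/R=\pi/2$, and the choice $A^k_+(0)\le\pi t_j/2$ finishes the argument in one stroke. (The rectangle now contains both graphs, so Proposition~\ref{P1} is being applied componentwise; this is the mild abuse the paper notes in item~(i) of its proof.)

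In short: your proof is correct provided the Ecker--Huisken route is taken for the gradient control; the paper's choice of a tall fixed rectangle with $r=\sqrt{2t_j}$ is the cleaner and more elementary device that makes your ``main obstacle'' disappear.
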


\begin{proof}

Consider a sequence $t_{j}\downarrow 0$. Then, by the assumptions on the initial curve $\Gamma_0$ and by construction of the approximating sequence $\Gamma^i_0$, the slingshots, after passing to a subsequence  $\Gamma_t^j$, satisfy the following. For any $j$, we can pick $x_j$ such that the following hold.
\begin{itemize}
\item[(i)] Let $\mathcal F(R, 2c, \sqrt{2t_j}):=[-R+ x_j, R+x_j]\times [-c, c]$, with $R=\frac{16c}{\pi}$. Then, for all $k\ge j$,  $\Gamma^k_0\cap \mathcal F(R, 2c, \sqrt{2t_j})$ has two connected components,  and for each of them $\mathcal F(R, 2c, \sqrt{2t_j})$  is a basic rectangle in the sense that on both components (i) and (ii) of Definition \ref{def-approx} are satisfied.
\item[(ii)] 
For all $k\ge j$, the area of the compact region bounded by $\Gamma^k_0$ in the halfplane $\{x\ge x_j-R\}$ is at most $\frac{\pi t_j}{2}$.
\end{itemize}
To prove the lemma, we will show that for all $j$ and $t\ge t_{j}$ we have $\Gamma^{k}_{t}\subset [a,R+x_{j}]\times [-c,c]$, for all $k\ge j$,  for which it suffices to prove that  $\Gamma^{k}_{t_j}\subset [a,R+x_{j}]\times [-c,c]$, for all $k\ge j$. Assume on the contrary that for some $ j$ and $k\ge j$ we have $\Gamma^{k}_{t_j}\cap( (R+x_{j}, \infty)\times [-c,c])\ne \emptyset$. 
First note that, by considering a small ball inside $\Gamma_0$ and by (i), the avoidance principle implies that  
\begin{equation*}\label{two}
\Gamma^k_t\cap \mathcal F(R, 2c, \sqrt{2t_j}) \text{ has two connected components, $\forall t\in [0, t_j]$}\,.
\end{equation*} 
 Let now $A^k_+(t)$ be the area of the compact region bounded by $\Gamma_t^k$ in the halfplane $\{x\ge x_j\}$. Since $\Gamma^k_t\cap \mathcal F(R, 2c, \sqrt{2 t_j})$ has two connected components, for all $t\in [0, t_j]$, Proposition \ref{P1} implies that 
\[
-\frac{d}{dt}A_+^k(t)\ge \pi-\frac{8c}{R}
\] 
and integration yields
\[
A_+^k(t_j)\le A_+^k(0)-t_j\left(\pi-\frac{8c}{R}\right)\le -\frac{\pi t_j}{2} +\frac{8c}{R}t_j<0
\]
which contradicts the hypothesis that $A_+^k(t_j)$ is positive, which is implied since we assumed that $\Gamma^{k}_{t_j}\cap( (x_{j}+R, \infty)\times [-c,c])\ne \emptyset$.
\end{proof}

The following lemma, which is the central lemma for our constructions, says that there is a decreasing sequence $t_{j}\downarrow 0$ such that  the slingshots, after passing to a subsequence $\Gamma^{j}_{t}$, for all $j$ and $t_{j}\leq t\leq t_{0}$ (where $t_0$ is some fixed positive time),  are covered by a fixed and finite set of basic rectangles and are therefore globally subject to the estimates of Corollary \ref{C1}. 
\begin{lemma}\label{mainlem}
There exists a decreasing sequence of times $t_{j}\downarrow 0$, $j\geq 0$, such that the slingshots, after passing to a subsequence $\Gamma_t^j$ satisfy the following. For every $j\geq 0$ there is a finite set of rectangles, 
\begin{equation}
\mathcal{F}(R_{j,1},D_{j,1},r_{j, 1}),\ldots,\mathcal{F}(R_{j,n_{j}},D_{j,n_{j}},r_{j,n_j}), 
\end{equation}
with $r_{j, k}\ge \sqrt{2t_0}$, $k=1, \dots, n_j$, that are basic for $\Gamma^{j}_{t}$ for any  $t\in [0,  t_{0}]$,
and moreover, 
\begin{equation}
\Gamma^{j}_{t}\subset \bigcup_{k=1}^{k=n_{j}}\mathcal{F}_{*}(R_{j,k},D_{j,k},r_{j,k})\,,\forall t\in [t_{j}, t_{0}].
\end{equation}
\end{lemma}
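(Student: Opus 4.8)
The plan is to build the sequence $t_j \downarrow 0$ and the rectangle collections by combining three ingredients that are already in hand: the compact-region/avoidance bookkeeping of Lemma \ref{P2}, the notion of basic rectangle, and the stability of basic rectangles under small perturbations of the curve. First I would fix $t_0 > 0$ small (for instance $t_0 < A_0/(2\pi)$ so that all flows under consideration still exist). The key structural observation is that $\Gamma_0$, being diffeomorphic to $(0,1)$ with the graphical behavior at infinity from hypothesis (ii), can be covered by a \emph{finite} family of basic rectangles $\mathcal F(R_\ell, D_\ell, r_\ell)$ for $\Gamma_0$ whose $\mathcal F_*$-parts still cover $\Gamma_0 \cap ([a,x]\times[-c,c])$ for any fixed $x$: near infinity one uses the graph structure to produce long thin rectangles (transversality of vertical-ish segments and disjointness of small balls above/below the graph follow from $u^\pm \to 0$ with derivatives $\to 0$), and on the compact part $\Gamma_0 \cap ([a, b+4]\times[-c,c])$ one uses compactness plus embeddedness to extract finitely many small basic rectangles by a standard covering argument. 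One should take all the $r_\ell$ here $\ge \sqrt{2t_0}$, shrinking $t_0$ if necessary.

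Next I would bring in Lemma \ref{P2}: apply it to \emph{any} sequence $\tau_j \downarrow 0$ to obtain, after passing to a subsequence $\Gamma^j_t$, numbers $x_j$ with $\Gamma^k_t \subset [a,x_j]\times[-c,c]$ for all $k \ge j$ and all $t \ge \tau_j$. So on the time interval $[\tau_j, t_0]$ the flow $\Gamma^j_t$ lives inside the fixed box $[a,x_j]\times[-c,c]$. Now fix $j$. On the box $[a,x_j]\times[-c,c]$, the finitely many rectangles from the first paragraph that meet this box — call this subfamily $\mathcal F(R_{j,1},D_{j,1},r_{j,1}),\dots,\mathcal F(R_{j,n_j},D_{j,n_j},r_{j,n_j})$ — are basic for $\Gamma_0$ and their $\mathcal F_*$-parts cover $\Gamma_0 \cap ([a,x_j]\times[-c,c])$. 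Because the defining conditions of a basic rectangle (transversal single-point intersection with every one of a compact family of segments, and disjointness of a compact family of balls) are \emph{open} conditions in the $C^1$ topology of the curve, they persist under sufficiently small $C^1$ perturbations; and since $\Gamma^i_0 \to \Gamma_0$ in $C^1_{\mathrm{loc}}$ by construction of the approximating curves, there is $i_j$ so that these rectangles are basic for $\Gamma^k_0$ for all $k \ge i_j$. By the non-increase of intersection number (\cite{MR953678}) and the avoidance principle applied exactly as in Proposition \ref{P1}, being a basic rectangle is preserved along the flow for the time it takes the top/bottom balls to possibly touch, namely for $t \in [0, r^2/2]$; since all $r_{j,k} \ge \sqrt{2t_0}$ this gives $t \in [0,t_0]$. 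So these rectangles are basic for $\Gamma^k_t$ for all $t\in[0,t_0]$ and all $k \ge \max(j, i_j)$.

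The remaining point is to get the \emph{covering} $\Gamma^j_t \subset \bigcup_k \mathcal F_*(R_{j,k},D_{j,k},r_{j,k})$ for $t \in [t_j, t_0]$, and to fix the actual sequence $t_j$ and the subsequence. Here one argues by continuity: the $\mathcal F_*$-rectangles form an \emph{open} cover of $\Gamma_0 \cap ([a,x_j]\times[-c,c])$, hence cover an open neighborhood $B(\Gamma_0,\epsilon_j) \cap ([a,x_j]\times[-c,c])$ for some $\epsilon_j>0$; and the flows $\Gamma^k_t$ converge to $\Gamma_0$ as $t\to 0$ uniformly enough (this is the content one extracts from the construction, again via the avoidance principle comparing $\Gamma^k_t$ with small circles and with $\Gamma_0$ itself, as in Lemma \ref{P2}) that for a suitable $t_j$ one has $\Gamma^k_{t} \subset B(\Gamma_0,\epsilon_j)$ for $t \in [t_j, t_j']$ with $t_j' $ not too small; combined with Lemma \ref{P2}'s confinement to the box, $\Gamma^k_t$ lies in the $\mathcal F_*$-cover for $t\in[t_j,t_0]$ after passing to a further subsequence and relabeling. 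Choosing $t_j$ decreasing, applying Lemma \ref{P2} with $\tau_j = t_j$, and diagonalizing over $j$ to select one subsequence $\Gamma^j_t$ that works simultaneously for all $j$ (using that the condition for index $j$ only constrains $\Gamma^k_t$ with $k\ge j$) completes the proof.

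I expect the main obstacle to be the covering step: making precise the sense in which the slingshots $\Gamma^k_t$ stay $\epsilon_j$-close to $\Gamma_0$ for $t$ in an interval $[t_j,\cdot]$ \emph{uniformly in $k \ge j$} — closeness near the cut-off end of $\Gamma^k_0$ and in the non-graphical compact part must both be controlled, and the natural tool is a careful barrier/avoidance argument (enclosing $\Gamma_0$ locally between inner and outer comparison curves that the flow cannot cross) rather than any quantitative estimate. The rest is bookkeeping: counting rectangles finitely, checking the open-ness of the basic-rectangle conditions, and organizing the subsequence extraction and diagonalization.
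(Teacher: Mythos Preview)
Your overall architecture matches the paper's, but the step you yourself flag as the main obstacle is a genuine gap, and the paper resolves it by a geometric idea that is absent from your sketch.

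The circularity is this: your covering argument requires $\Gamma^k_t \subset B(\Gamma_0,\epsilon_j)$ for small $t$, uniformly in $k\ge j$. But that closeness is essentially part of the conclusion of Theorem \ref{main}, and in the paper it is proved at the very end \emph{using} the curvature bounds that come from Lemma \ref{mainlem} via Proposition \ref{curvest}. The barrier mechanism you propose does not obviously supply it: $\Gamma_0$ is not a CSF solution, so neither it nor nearby translates/dilates can be fed into the intersection-number principle, and small interior circles only prevent collapse, not bulging away from $\Gamma_0$. With thin horizontal rectangles along the graphs $u^\pm$ the $\mathcal F_*$-pieces only cover a narrow tube about $\Gamma_0$, so you are stuck.

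The paper sidesteps the tube entirely. In the tail region the basic rectangles are not horizontal slabs around one branch; they are \emph{tilted} rectangles built along rays $s$ emanating from a single basepoint $q_k=(b,-y_k)$ chosen inside a fixed ball $\overline B_{\hat r}((b,0))$ contained in the finite-area region enclosed by $\Gamma_0$. Because each such ray exits the enclosed region exactly once, a thin rectangle along it (with one short side near $q_k$, the other above the strip $\{|y|\le c\}$) is basic for $\Gamma_0$, with $r=\hat r/4$ independent of the ray. Fanning finitely many of these rays makes the associated $\mathcal F_*$'s cover the \emph{solid} rectangle $\mathcal R_k^+\cup\mathcal R_k^-=[b+1,k]\times[-c,c]$, not merely a neighborhood of $\Gamma_0$. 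Since all these tilted rectangles have nonzero slope and width $\le \hat r/4$, they lie in $\{x\le \hat i_k\}$ for some $\hat i_k$, hence are basic for $\Gamma_0^i$ once $i\ge \hat i_k$ (where $\Gamma_0^i=\Gamma_0$). Now Lemma \ref{P2}'s confinement $\Gamma^j_t\subset[a,x_j]\times[-c,c]$ for $t\ge t_j$ immediately gives the covering for the tail, with no closeness to $\Gamma_0$ needed. On the compact piece $\{x\le b+5/4\}$ the paper uses a finite chain $\mathcal F^{\pm},\mathcal F^1,\dots,\mathcal F^l$ of basic rectangles (all in $\{x<b+2\}$, where every $\Gamma_0^i$ equals $\Gamma_0$) arranged so that the right edge of each $\mathcal F_*^m$ sits inside $\mathcal F_*^{m-1}$; Proposition \ref{P1} then forces $\Gamma_t^i\cap\mathcal F_*^m$ to be a single graphical arc joining the two short sides, and the chaining gives the cover of $\Gamma_t^i\cap\{x\le b+5/4\}$ directly, again without any $\epsilon$-tube argument.

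So the missing idea is: in the tail, construct the basic rectangles so that their $\mathcal F_*$'s cover the whole box $[b+1,k]\times[-c,c]$, achieved by anchoring them at a common interior point and fanning. Your remaining steps (persistence of the basic-rectangle property along the flow for $t\le r^2/2$ via Proposition \ref{P1}, transferring from $\Gamma_0$ to $\Gamma_0^i$, and the diagonal subsequence extraction) are essentially as in the paper.
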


\begin{proof} 

We first construct basic rectangles that will cover the slingshots in a compact set, where all the initial curves $\Gamma_0^i$ coincide. 

Let $r_{0}>0$ be such that $[b,b+2]\times [0,c]$ and $[b,b+2]\times [-c,0]$ together with $r_{0}$ form basic rectangles for $\Gamma_0$, and we denote these by $\mathcal{F}^{\pm}$, respectively. 
Then, let
\begin{equation}
\mathcal{F}^{1}=\mathcal{F}(R_{1},D_{1},r_{1}),\ldots,\mathcal{F}^{l}=\mathcal{F}(R_{l},D_{l},r_{l}),
\end{equation}
be a collection of basic rectangles for $\Gamma_0$ with associated isometries $T_{m}$ and such that:
\begin{enumerate}[label=(\roman*)]
\item \label{(i)} $\mathcal{F}^{m}\subset \{x<b+2\}$, for $m=1,\ldots,l$,
\item \label{(iii)} $\mathcal{F}^{1}\subset {\rm Int}(\mathcal{F}^{+}_{*})$ and $\mathcal{F}^{l}\subset {\rm Int}(\mathcal{F}^{-}_{*})$,
\item \label{(iv)} $T_{m}(\{\frac{R_{m}}{4}\}\times [0,D_{m}])\subset {\rm Int}(\mathcal{F}^{m-1}_{*})$, for $m=2,\ldots, l$.
\end{enumerate}
Note that the rectangles $\mathcal{F}^{\pm}$ and $\mathcal{F}^{m}$, for $m=1,\dots, l$, are also basic rectangles for $\Gamma_0^i$, for all $i\in \N$. This is because they are contained in the half plane $\{x\le b+3\}$, where $\Gamma^{i}_{0}$ and $\Gamma_0$ coincide. Define, 
\begin{equation}\label{tb}
\bar{t}:=\tfrac12{\min\{r_{0}^{2},r^{2}_{1},\ldots,r^{2}_{l}\}} 
\end{equation}
and also 
\[
\mathscr{F}(0):=\{\mathcal F^+, \mathcal F^-, \mathcal F^1, \dots, \mathcal F^l\}\,.
\]
We claim that for any $i$ and $0\leq t\leq \bar{t}$ we have,
\begin{equation}\label{cover1}
\Gamma^{i}_{t}\cap \{x\leq b+5/4\}\ \subset\bigcup_{\mathcal F\in \mathscr F(0)}\mathcal{F}_{*}\,.
\end{equation}
To see this, let $\mathcal F\in \mathscr F(0)$.  Then, by Proposition \ref{P1}, we have that,  for any $i$ and any $0\leq t\leq \bar{t}$,  $\Gamma^{i}_{t}\cap \mathcal{F}_{*}$ is a connected 1-manifold with two boundary points lying in two opposite sides of the corresponding rectangle: $T_m(\{\pm \frac{R_m}{4}\}\times[0, D_m])$ if $\mathcal F=\mathcal F^m$, $m=1, \dots, l$, and accordingly if $\mathcal F=\mathcal F^\pm$. By conditions (ii) and (iii) above the claim follows.

\begin{figure}[h]
\centering
\includegraphics[scale=0.6]{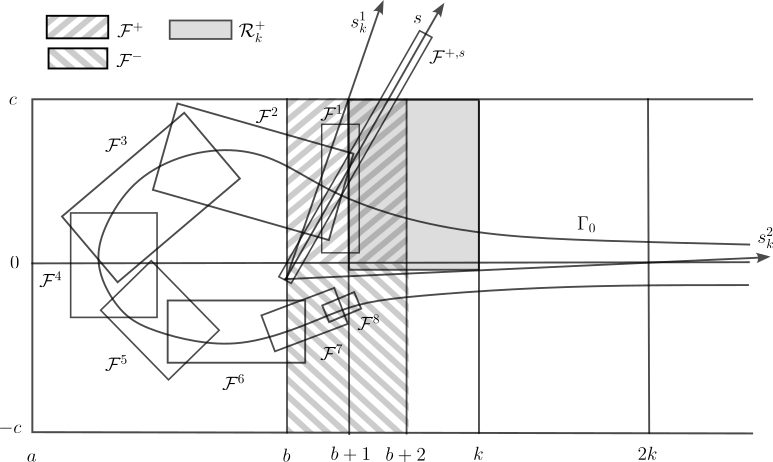}
\caption{Schematic figure of the rectangles $\mathcal{F}^{\pm}$, the $\mathcal{F}^{m}$ and the $\mathcal{F}^{+,s}$. The rectangle $\mathcal{R}^{+}_{k}$ is also shown.}
\label{fig2}
\end{figure}

The next step is to construct basic rectangles that cover the entirety of the slingshots for times $t> t_j$. An essential tool to do that is Lemma \ref{P2}, which allows us to deduce that after time $t_j$ all slingshots have entered a compact set.  

For any integer $k>b+1$,  we let $y_{k}:=\min\{u^{+}(2k),-u^{-}(2k)\}$ and set $q_k:=(b, -y_k)$. We then define $s^1_k$ and $s_k^2$ be the two rays starting from $q_{k}$ and passing through $(2k,0)$ and $(b+1, c)$ respectively. Note that both rays intersect $\Gamma_0$ transversely and only once at a point with positive $y$-coordinate. Define also the rectangle $\mathcal{R}^{+}_{k}:=[b+1,k]\times [\frac{-ky_{k}}{2k-b},c]$ and note that it lies in the region between the two rays and has one vertex on each of them.
Hence, any infinite ray from $q_{k}$ and passing through any point in $\mathcal{R}^{+}_{k}$ intersects $\Gamma_0$ transversely and only once. We will use this fact to cover the slingshots by basic rectangles in $\mathcal{R}^{+}_{k}$.


Let $\hat{r}\in (0, 1)$ be such that $\overline{B}_{\hat r}((b, 0))$ is contained in the open region of finite area enclosed by $\Gamma_0$. Since, $y_k\downarrow 0$ as $k\to \infty$, we can choose  $\hat{k}$ such that $y_{{k}}\leq \frac{\hat{r}}{4}$, for all $k\ge \hat k$, and from now on we consider such a $k\ge \hat k$.  Consider $s$ to be a ray starting from $q_{k}$ and passing through a point in $\mathcal{R}^{+}_{k}$. Since every such ray has positive slope and intersects $\Gamma_0$ transversely and only once, for each such $s$, we can find a rectangle $T_{s}([-R_{s},R_{s}]\times [0,D_{s}])$, for some isometry $T_{s}$, with the following properties:
\begin{enumerate}
\item\label{1} ${ T_{s}}(\{0\}\times [0,D_{s}])\subset s$ and  $T_{s}((0,0))=q_{k}$,
\item\label{2} $R_{s}\leq \frac{\hat{r}}{4}$ and $D_{s}$ is large enough so that $\langle T((0,D_{s})), e_2\rangle\geq c+\hat{r}$,
\item\label{3} $\Gamma_0\cap T_{s}([-R_{s},R_{s}]\times [0,D_{s}])$ is a graph over $T([-R_{s},R_{s}]\times \{0\})$. 
\end{enumerate}
Since $T_s([-R_{s},R_{s}]\times \{0\})\subset B_{\frac{\hat r}{2}}((b, 0))$ and by properties (2) and (3) above we conclude  that $T_{s}([-R_{s},R_{s}]\times [0,D_{s}])$ together with $r=\frac{\hat{r}}{4}$ is a basic rectangle for $\Gamma_0$, which we  denote  as $\mathcal{F}^{+,s}$. By compactness, we can find a collection of rays $s_{k,1},\ldots, s_{k,l_{k}}$ such that $\mathcal{F}^{+,s_{k,1}}_{*},\ldots,\mathcal{F}^{+,s_{k,l_{k}}}_{*}$ cover $\mathcal{R}_{k}^{+}$. From now on and to simplify notation we write $\mathcal{F}^{+,k,j}$ instead of $\mathcal{F}^{+,s_{k,j}}$.
An identical reasoning shows that we can find a collection of basic rectangles, 
\begin{equation}
\mathcal{F}^{-,k,1},\ldots,\mathcal{F}^{-,k,h_{k}},
\end{equation}
for $\Gamma_0$, all with $r=\frac{\hat{r}}{4}$ and such that $\mathcal{F}^{-,k,1}_{*},\ldots,\mathcal{F}^{-,k,h_{k}}_{*}$ are covering the rectangle $\mathcal{R}^{-}_{k}:=[b+1,k]\times [-c, \frac{ky_{k}}{2k-b}]$. We will denote by $\mathscr F(k)$ all these rectangles 
\begin{equation}\label{FK}
\mathscr F(k)=\{\mathcal{F}^{+,k,1},\ldots,\mathcal{F}^{+,k,l_{k}}, \mathcal{F}^{-,k,1},\ldots,\mathcal{F}^{-,k,h_{k}}\}\,.
\end{equation}
Note that $\mathcal{R}^{+}_{k}\cup \mathcal{R}^{-}_{k} = [b+1,k]\times [-c,c]$ and therefore
\begin{equation}\label{cover2}
 [b+1,k]\times [-c,c]\subset \bigcup_{\mathcal F\in \mathscr F(k)}\mathcal F_*\,.
\end{equation}

Given $k\geq \hat{k}$ let $\hat{i}_{k}>0$ be large enough so that none of the basic rectangles $\mathcal{F}\in \mathscr F(k)$ intersects the region $[\hat{i}_{k}, \infty]\times [-c,c]$. Note that this is possible, since all these rectangles have non zero slope and width bounded by $\frac{\hat r}{4}$. Recalling the definition of $\Gamma^{i}_{0}$, we deduce that these basic rectangles for $\Gamma_0$ are also basic rectangles for $\Gamma^{i}_{0}$ when $i\geq \hat{i}_{k}$.  Let $t_{j}\downarrow 0$ and $x_{j}$ be the sequences of Lemma \ref{P2}, for which, after dropping some initial terms if necessary, we will assume that $t_{1}< t_0:=\min\{\bar{t},\frac{\hat{r}^{2}}{32}\}$. 
Let $k_{1}$ be any integer such that $k_{1}\geq \max\{\hat{k}, x_{{1}}\}$. 
%
%
By Lemma \ref{P2}, we have that the slingshots, after passing to a subsequence $\Gamma_t^j$,  satisfy,  for any $j$ and $t\ge t_1$,
\begin{equation}\label{cover3}
\begin{split}
\Gamma^{j}_{t}\subset [a,x_{{1}}]\times [-c,c]&\subset  ([a,b+1]\times [-c,c])\cup \mathcal{R}^{+}_{k_{1}}\cup \mathcal{R}^{-}_{k_{1}}\\
&\subset \bigcup_{\mathcal F\in \mathscr F(0)\cup \mathscr F(k_1)}\mathcal F_*\,,
\end{split}
\end{equation}
with the second inclusion following by \eqref{cover1} and \eqref{cover2}, and where $\mathscr F(k)$ is as constructed in \eqref{FK}.
Finally note that for any $t\le t_0$ and for $i\ge \hat{i}_{k}$,  
$\mathcal F$ is a basic rectangle for $\Gamma^{i}_{t}$ for all $\mathcal F\in \mathscr F(0)\cup \mathscr F(k_1)$ and $t\in [0, t_0]$. Hence, the slingshots, after passing to a further subsequence, still denoted by $\Gamma_t^j$,  satisfy, for any $j$,
\[
\Gamma^j_t\subset \bigcup_{\mathcal F\in \mathscr F(0)\cup \mathscr F(k_1)}\mathcal F_*\,,\,\,\forall t\in [t_1, t_0],
\]
where $\mathscr F(0)\cup\mathscr F(k_1)$ is a finite family of rectangles that are basic for $\Gamma^j_t$, for all $j $ and $t\le t_0$ and moreover these rectangles are of the form $\mathcal F(R, D, r)$ with $r\ge \sqrt{2 t_0}$.
We can now finish the proof of the proposition, by constructing the rest of the sequence as follows. For each $t_j$ as above (from Lemma \ref{P2}), with $j\ge 2$, we choose $k_j\ge  \max\{k_{j-1}, x_j\}$ . Then we construct the family of basic rectangles $\mathscr F(k_j)$ as in \eqref{FK}. We then note that there exists $\hat i_{k_j}$ large enough,  so that none of the basic rectangles $\mathcal{F}\in \mathscr F(k_j)$ intersects the region $[\hat i_{k_j}, \infty]\times [-c,c]$, therefore for all $i\ge \hat i_{k_j}$ and $t\in [0, t_0]$, $ \mathcal F$  is a basic rectangle for $\Gamma^{i}_{t}$ for all $\mathcal F\in \mathscr F(0)\cup \mathscr F(k_j)$. Hence, the slingshots, after passing to a further subsequence, still denoted by $\Gamma_t^j$,  satisfy, for any $j$,
\[
\Gamma^j_t\subset \bigcup_{\mathcal F\in \mathscr F(0)\cup \mathscr F(k_j)}\mathcal F_*\,,\,\,\forall t\in [t_j, t_0],
\]
where $\mathscr F(0)\cup\mathscr F(k_j)$ is a finite family of rectangles that are basic for $\Gamma^j_t$, for $j\ge1 $ and $t\le t_0$ and moreover these rectangles are of the form $\mathcal F(R, D, r)$ with $r\ge \sqrt{2 t_0}$.

\end{proof}

\begin{proof}[Proof of Theorem \ref{main}]
Consider $t_0>0$ as in Lemma \ref{mainlem}. Lemma \ref{mainlem} and Proposition \ref{curvest} imply that we can apply a compactness argument  (which amounts to the Arzela--Ascoli theorem) 
 to the  sequence of embeddings $\gamma^j_{t_0}:S^1\to \R^2$. This yields that there exists a smooth embedding $\gamma^\infty_{t_0}:S^1\to \R^2$ and a sequence of diffeomorphisms of $S^1$, $\phi_j$, such that after passing to a subsequence, $\gamma^j_{t_0}\circ\phi_j$ converges smoothly to $\gamma^\infty_{t_0}$. Let $t_j\downarrow 0$ be as in Lemma \ref{mainlem} and define the diffeomorphisms

\[
\begin{split}
\psi_j: S^1\times [t_j, t_0]&\to S^1\times [t_j, t_0]\\
(x,t)&\mapsto \psi_j(x,t)=(\phi_j(x), t)\,.
\end{split}
\]
Note that Lemma \ref{mainlem} and Proposition \ref{curvest}, along with the evolution equation of the curvature and its derivatives (which yield time derivative bounds on the curvature and its derivatives), imply uniform bounds on the curvature and its derivatives  for the sequence $\gamma^j\circ\psi_j$ (locally in $S^1\times(0, t_0]$). Therefore, the Arzela--Ascoli theorem and a diagonal argument yield that there exists a smooth map $\gamma^\infty: S^1\times(0, t_0]\to\R^2$, with $\gamma^\infty(\cdot, t): S^1\to\R^2$ a smooth embedding for each $t\in (0, t_0]$ and $\gamma^\infty(\cdot, t_0)=\gamma^\infty_{t_0}(\cdot)$, and such that, after passing to a further subsequence, $\gamma^j\circ\psi_j$ converges to $\gamma^\infty$ smoothly on compact sets of $S^1\times(0, t_0]$. The smooth convergence does imply that $\gamma^\infty$ satisfies curve shortening flow \eqref{csf}. Also, since  $\gamma^\infty(\cdot, t): S^1\to\R^2$ a smooth embedding for each $t\in (0, t_0]$, by Grayson's theorem \cite{GR87}, we can extend the flow until it disappears to a round point. We have created thus a smooth flow  $\gamma^\infty: S^1\times(0, T)\to\R^2$, which agrees with the above defined $\gamma^\infty$ in $(0, t_0)$ and such that it converges to a round point as $t\to T$.

Finally, to finish the proof we need to show that
\begin{itemize}
\item[(i)] $T=\frac{A_0}{2\pi}$ and
\item[(ii)]  $\forall \epsilon>0$, $\exists \,t_\e>0$: $\Gamma_t:=\gamma^\infty(S^1, t)\subset B(\Gamma_0,\epsilon)$, $\forall \,0<t<t_\e$. 
\end{itemize}
To see (i), let $A^\infty(t)$ denote the (finite) area enclosed by $\Gamma_t$ and $A^j(t)$ that of the approximating curves $\Gamma^j_t=\gamma^j(S^1, t)$. By the convergence for $t\in (0, t_0]$, we have
\[
A^\infty(t_0)=\lim_j A^j(t_0)=\lim_jA^j(0)-2\pi t_0= A_0-2\pi t_0\,.
\]
Since $0=\lim_{t\to T}A^\infty(t)= A^\infty(t_0)-2\pi (T-t_0)$, we obtain (i).

In order to see (ii), we let $\e>0$. It suffices to show that there exists $t_\e$ such that for all $j$ large enough $\Gamma^j_t\subset B(\Gamma_0, \e)$, for all $t\in (0, t_\e)$. Assume that this is not the case, but instead, there exists a sequence of times $t_k\downarrow 0$ and a sequence of points of the slingshots $x_k\in \Gamma^{j_k}_{t_k}$, with $j_k\to \infty$,  such that $\dist(x_k, \Gamma_0)>\e$.
Note first, that by the assumption on $\Gamma_0$ and the approximating sequence $\Gamma_0^i$,  
a simple argument using grim reapers, parallel to the x-axis, as barriers implies that eventually the points $x_k$ must be in a compact set, that is, there exists $k_0$ and a compact set $K$, such that for all $k\ge k_0$, $x_k\in K$. 

Finally,  the proof of Lemma \ref{mainlem}, yields a uniform curvature bound for the slingshots in compact sets, which amounts to a uniform bound in the velocity. This implies that the distance traveled goes uniformly to zero, that is  $\dist(\Gamma^{j_k}_{t_k}\cap K, \Gamma_0)\to 0$, as $k\to \infty$, and thus we obtain a contradiction.

\end{proof}

\bibliographystyle{acm}
\bibliography{../../../bibliography}

\end{document}